\newtheorem{theorem}{Theorem}
\newtheorem{lemma}[theorem]{Lemma}
\theoremstyle{definition}
\newtheorem{remark}[theorem]{Remark}
\numberwithin{equation}{section}
\newtheorem{example}[theorem]{Example}
\newtheorem{thma}{Theorem}
\newtheorem{thmb}{Theorem}
\newtheorem{thmc}{Theorem}
\newtheorem{thmd}{Theorem}
\newtheorem{step}{Step}
\newtheorem{open problem}{Open Problem}
\newcommand{\C}{\mathbb{C}}
\newcommand{\D}{\mathbb{D}}
\newcommand{\R}{\mathbb{R}}
\newcommand{\T}{\mathbb{T}}
\newcommand{\re}{\mathrm{Re\,}}
\newcommand{\im}{\mathrm{Im\,}}
\DeclareMathOperator{\esssup}{ess\, sup}
\DeclareMathOperator*{\essinf}{ess\,inf}
\DeclareMathOperator{\BMO}{BMO}
\DeclareMathOperator{\dist}{dist}
\DeclareMathOperator{\sgn}{sgn}
\author{H. Gissy}
\author{S. Miihkinen}
\author{J. A. Virtanen}
\address{Department of Mathematics, University of Reading, England}
\thanks{Gissy was supported by a University of Reading doctoral grant. Miihkinen was supported by Emil Aaltonen Foundation. Virtanen was supported in part by Engineering and Physical Sciences Research Council grant EP/T008636/1.}
\begin{document}

\title{On the exponential integrability of conjugate functions}

\begin{abstract}
We relate the exponential integrability of the conjugate function $\tilde f$ to the size of the gap in the essential range of $f$. Our main result complements a related theorem of Zygmund. 

\bigskip

\noindent\textbf{MSC(2020):} 42A50\\
\textbf{Keywords:} exponential integrability, conjugate function, Hilbert transform, outer functions
\end{abstract}

\maketitle

\section{Introduction}
We denote by $L^p$ the usual Lebesgue spaces of functions on the unit circle $\T$ with norm $\|\cdot\|_p$. Given $f\in L^1$, let $u$ be the Poisson integral of $f$ and denote by $\tilde u$ the harmonic conjugate function of $u$, normalized so that $\tilde u(0)=0$. Then $\tilde u(z)$ has nontangential limit $\tilde f(\theta)$ almost everywhere on $\T$ and we call $\tilde f$ the conjugate function of $f$. Alternatively, the conjugate function $\tilde f$ can be defined  
as the principal value integral
\begin{equation}\label{e:transformation}
	\tilde f(\theta) = \lim_{\epsilon\to 0} \frac1{2\pi} \int_{|\theta-\varphi|>\epsilon}  \cot\left(\frac{\theta-\varphi}2\right) f(\varphi) \,d\varphi
\end{equation}
for almost every $\theta$. For further details and references, see Section~\ref{Poisson subsection} below.

The linear mapping $f\mapsto \tilde f$ is referred to as the conjugation operator. If $f$ is a trigonometric polynomial $\sum_{n=-N}^N a_n e^{in\theta}$, then $\tilde f$ is a trigonometric polynomial of the same degree
$$
	\tilde f(\theta) = \sum_{n=-N}^N -i \sgn(n) a_n e^{in\theta},
$$
where $-i\sgn$ is the Fourier multiplier associated with the conjugation operator.

When $1<p<\infty$, according to a famous theorem of M.~Riesz, there is a constant $C_p$ such that
$$
	\|\tilde f\|_p \le C_p \|f\|_p
$$
for all $f\in L^p$. In addition, although $f\in L^\infty$ does not imply that $\tilde f\in L^\infty$ (see, e.g.~\cite{Garnett}), the Hilbert transform still has very strong boundedness properties as can be seen in the following theorem, due to Zygmund~\cite{Zygmund}.

\begin{thma}[Zygmund]\label{Zygmund}
For $f\in L^\infty$ with $\|f\|_\infty \le \pi/2$ and $\lambda<1$, there is a constant $C_\lambda$ such that
\begin{equation}\label{e:Zygmund}
	\frac1{2\pi} \int_0^{2\pi} e^{\lambda |\tilde f|} < C_\lambda,
\end{equation}
and if $f$ is continuous on $\mathbb{T}$, then
\begin{equation}\label{e:continuous}
	\frac1{2\pi} \int_0^{2\pi} e^{\lambda |\tilde f|} < \infty
\end{equation}
for all $\lambda<\infty$.
\end{thma}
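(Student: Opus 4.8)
The plan is to pass to the harmonic extension and exploit an analytic function whose real part is nonnegative. Write $u$ for the Poisson integral of $f$ and $\tilde u$ for its conjugate with $\tilde u(0)=0$, and set $F = u + i\tilde u$, analytic on $\mathbb{D}$. Since the Poisson kernel is a probability density and $\|f\|_\infty \le \pi/2$, we get $|u| \le \pi/2$ throughout $\mathbb{D}$. The central idea is to consider, for fixed $\lambda < 1$, the analytic function $h = \cos(\lambda F)$ and to compute its real part. Using $\cos(a+ib) = \cos a\cosh b - i\sin a \sinh b$, I would obtain
\[
	\re h = \cos(\lambda u)\cosh(\lambda\tilde u).
\]
Because $|\lambda u| \le \lambda\pi/2 < \pi/2$, the factor $\cos(\lambda u)$ is bounded below by $\cos(\lambda\pi/2) > 0$, so $\re h \ge 0$; this nonnegativity is exactly what makes the argument run.

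First I would record the elementary bound $\cosh(\lambda\tilde u) \ge \tfrac12 e^{\lambda|\tilde u|}$, so that
\[
	\re h \ge \frac{\cos(\lambda\pi/2)}{2}\,e^{\lambda|\tilde u|}.
\]
Next, since $\re h$ is harmonic on $\mathbb{D}$ and $F(0) = u(0)$ is real, the mean value property yields, for every $r < 1$,
\[
	\frac1{2\pi}\int_0^{2\pi}\re h(re^{i\theta})\,d\theta = \re h(0) = \cos(\lambda u(0)) \le 1.
\]
Combining the two displays gives the uniform bound $\frac1{2\pi}\int_0^{2\pi}e^{\lambda|\tilde u(re^{i\theta})|}\,d\theta \le 2/\cos(\lambda\pi/2)$. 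Finally, letting $r \to 1$ and invoking the a.e.\ convergence $\tilde u(re^{i\theta}) \to \tilde f(\theta)$ together with Fatou's lemma produces \eqref{e:Zygmund} with $C_\lambda = 2/\cos(\lambda\pi/2)$.

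For the continuous case I would first note that the computation above is not tied to the bound $\pi/2$: if $\|g\|_\infty \le \delta$, then choosing $\mu$ with $\mu\delta < \pi/2$ and repeating the argument with $\cos(\mu G)$ (where $G$ is the analytic completion of the Poisson integral of $g$) gives $\frac1{2\pi}\int e^{\mu|\tilde g|} \le 2/\cos(\mu\delta)$, that is, exponential integrability for every exponent below $\pi/(2\delta)$. Then, given an arbitrary $\lambda < \infty$, I would fix $\delta < \pi/(2\lambda)$ and use that a continuous $f$ can be uniformly approximated by a trigonometric polynomial $p$ (Fej\'er), writing $f = p + g$ with $\|g\|_\infty \le \delta$. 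Since $\tilde p$ is again a trigonometric polynomial, it is bounded, say $\|\tilde p\|_\infty \le M$; from $|\tilde f| \le M + |\tilde g|$ we get $e^{\lambda|\tilde f|} \le e^{\lambda M}e^{\lambda|\tilde g|}$, and the scaled estimate applied to $g$ (valid because $\lambda < \pi/(2\delta)$) gives \eqref{e:continuous}.

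The main obstacle, or rather the one genuinely clever point, is recognizing that $\cos(\lambda F)$ has nonnegative real part; once that is in hand the rest is the mean value property, an elementary hyperbolic estimate, and Fatou's lemma. A secondary technical point to watch is the justification of the passage $r\to 1$, where I rely on the a.e.\ existence of the nontangential boundary values recorded in the introduction.
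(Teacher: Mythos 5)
Your proof is correct, and it is essentially the approach of the proof the paper points to: the paper does not prove Theorem A itself but cites Corollary III.2.6 of Garnett's \emph{Bounded Analytic Functions}, where the bounded case is obtained by applying the mean value property and Fatou's lemma to the analytic functions $e^{\mp i\lambda F}$ (whose real parts are $e^{\pm\lambda\tilde u}\cos(\lambda u)\ge 0$) and adding the two estimates---your single function $\cos(\lambda F)=\tfrac12\left(e^{i\lambda F}+e^{-i\lambda F}\right)$ is the same device in one stroke---and the continuous case is handled there by exactly your Fej\'er splitting $f=p+g$ with $\tilde p$ bounded and $\|g\|_\infty$ small.
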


For the proof, see Corollary~III.2.6 of~\cite{Garnett}. It follows  
that
\begin{equation}\label{e:Garnett}
	f = f_1 + f_2,\ \|f_1\|_\infty < \pi/2,\ f_2\in C(\T) \implies \ \exp(\tilde f)\in L^1,
\end{equation}
where $C(\T)$ stands for the space of continuous functions on $\T$.

Let $E$ be a measurable subset of $\mathbb{T}$ and define
\begin{equation}\label{e:rho_E}
	\rho_E(z) = 2\chi_E(z) - 1 = \begin{cases}
	1, & z\in E\\
	-1, & z\notin \mathbb{T}\setminus E.
	\end{cases}
\end{equation}
The Lebesgue measure of $E$ is denoted by $|E|$. We note that the condition $\|f_1\|_\infty<\pi/2$ above is optimal as seen by considering an interval $E = [a,b] \subset (0,2\pi)$ and showing that 
\begin{equation}\label{e:interval example}
	\exp\Big(\tfrac\pi2\tilde \rho_E(t)\Big) = \exp\left( -\log \left| \sin\frac{t-b}2\right| + \log \left| \sin\frac{t-a}2\right|\right)
	=\left| \frac{\sin\frac{t-a}2}{\sin\frac{t-b}2}\right|,
\end{equation}
which is not integrable in any neighborhood of $b$. More generally, it follows from \eqref{e:interval example} and Theorem~III.2.7 of~\cite{Garnett}  
that $\exp(\tfrac\pi2\tilde \rho_E) \notin L^1$ whenever $E$ is a measurable subset of $\T$ with $0<|E|<2\pi$. 

Notice that the conditions of \eqref{e:Garnett} imply that if the function $f$ is real valued and has jumps, then the size of each jump is strictly less than $\pi$, while the size of each jump of $\frac\pi2\rho_E$ is exactly $\pi$ and $\exp (\frac\pi2\tilde\rho_E)\notin L^1$ with $E$ as above. Motivated by the study of the Fredholm properties of Toeplitz operators, Shargorodsky~\cite{S94} proved that if $g$ is real valued and $\inf \mathcal{R}(g) > \pi/2$, where $\mathcal{R}(g)$ stands for the essential range of $g$, then $\exp(\widetilde{g\rho_E})$ is not integrable. These observations lead to the question of whether
\begin{equation}\label{e:question}
	f\in L^\infty\ {\rm and}\ f\geq \pi/2\ {\rm a.e.} 
	\implies \exp(\widetilde{f\rho_E}) \notin L^1.
\end{equation}
Our main result answers this in the affirmative. Indeed, we give an elementary proof of the following result in Section~\ref{Proofs of the main results}.

\begin{theorem}\label{1.0}
Suppose that $f\in L^\infty$ with $f\ge \pi/2$ almost everywhere and $0<|E|<2\pi$. Then
there is a positive constant $C$ such that
\begin{equation}\label{e:1.2}
	|E_\lambda| = \left|\{t : \widetilde{f \rho_E }(t) > \lambda\}\right| \ge Ce^{-\lambda}
\end{equation}
for all $\lambda \ge 0$. In particular,
\begin{equation}\label{e:main2}
	\exp\left(\widetilde{f \rho_E}\right) \notin L^1.
\end{equation}
\end{theorem}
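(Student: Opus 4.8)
The implication \eqref{e:1.2}$\Rightarrow$\eqref{e:main2} is the routine direction: writing $g=\widetilde{f\rho_E}$ and substituting $s=e^{\lambda}$ in the layer-cake formula gives
\[
\int_{\T} e^{g}\,d\theta=\int_{\R}e^{\lambda}\,|\{g>\lambda\}|\,d\lambda\ge\int_0^{\infty}e^{\lambda}\cdot Ce^{-\lambda}\,d\lambda=\int_0^{\infty}C\,d\lambda=\infty,
\]
so that $\exp(\widetilde{f\rho_E})\notin L^1$. The entire content of the theorem is therefore the one-sided distribution estimate \eqref{e:1.2}, and that is what I would concentrate on.

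To prove \eqref{e:1.2}, the plan is to pass to the outer function
\[
\Omega(z)=\exp\Big(\tfrac1{2\pi}\int_0^{2\pi}\tfrac{e^{i\theta}+z}{e^{i\theta}-z}\,f(\theta)\rho_E(\theta)\,d\theta\Big),
\]
whose boundary values satisfy $|\Omega|=e^{f\rho_E}$ and $\arg\Omega=\widetilde{f\rho_E}$ almost everywhere. Since $f\in L^\infty$, both $\Omega$ and $\Omega^{-1}$ lie in $H^\infty$ with $e^{-\|f\|_\infty}\le|\Omega|\le e^{\|f\|_\infty}$, so that $\log\Omega$ maps $\D$ into the vertical strip $\{|\re w|\le\|f\|_\infty\}$ and $\im\log\Omega=\widetilde{f\rho_E}$ is the unbounded coordinate. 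The hypothesis $f\ge\pi/2$ translates into the geometric statement $|\Omega|\ge e^{\pi/2}$ on $E$ and $|\Omega|\le e^{-\pi/2}$ on $E^{c}$, i.e.\ the boundary values of $\re\log\Omega$ omit the open band $(-\pi/2,\pi/2)$; equivalently, the harmonic conjugate of $\widetilde{f\rho_E}$ is bounded and omits an interval of length exactly $\pi$. In these terms \eqref{e:1.2} is the assertion that $\arg\Omega>\lambda$ on a boundary set of measure at least $Ce^{-\lambda}$.

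The model is the case $f\equiv\pi/2$ with $E$ a single arc, where \eqref{e:interval example} gives $\tfrac{\pi}2\tilde\rho_E=\log|\sin\tfrac{t-a}2|-\log|\sin\tfrac{t-b}2|$ explicitly; near the endpoint $b$ this grows like $\log\frac1{|t-b|}$, so $\{\tfrac{\pi}2\tilde\rho_E>\lambda\}$ contains an interval about $b$ of length $\asymp e^{-\lambda}$, which is exactly the sharp rate in \eqref{e:1.2}. I would then try to reduce the general case to this model by showing that the omitted band of length $\pi$ forces $\Omega$ to wind, producing a boundary point near which $\arg\Omega$ grows at least logarithmically at the model rate; the natural tool is a subordination or harmonic-measure comparison inside the strip that uses \emph{only} the width of the omitted band, and not the detailed structure of $f$ or of the measurable set $E$.

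The main obstacle is precisely that $E$ is an arbitrary measurable set and $f$ is merely bounded below by $\pi/2$. Splitting $f=\tfrac\pi2+g$ with $g\ge0$ gives $\widetilde{f\rho_E}=\tfrac\pi2\tilde\rho_E+\widetilde{g\rho_E}$, and although the first term blows up on the expected $e^{-\lambda}$-set, the perturbation $\widetilde{g\rho_E}$ — for which Theorem~\ref{Zygmund} controls only exponential integrability, not a pointwise lower bound — could in principle be very negative there and cancel the blow-up. The crux is thus to obtain a \emph{cancellation-proof} quantitative lower bound; I expect this to come from the conformal geometry of the strip, where the omitted band of length $\pi$ is an invariant insensitive to the size of $g$, rather than from any attempt to estimate $\widetilde{g\rho_E}$ directly.
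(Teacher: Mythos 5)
Your derivation of \eqref{e:main2} from \eqref{e:1.2} is correct and agrees with the paper's (the paper phrases it through the distribution function $m(\lambda)=|E_{\log\lambda}|$ and Lemma I.4.1 of \cite{Garnett}, but it is the same layer-cake computation). Your framework for \eqref{e:1.2} is also the right one and matches the paper's starting point: the paper works with precisely the analytic function $X=\log\Omega$, i.e.\ the Herglotz integral of $f\rho_E$, uses $|\re X|<\|f\|_\infty$ on $\D$, and exploits that the boundary values of $\re X$ avoid the band $(-\pi/2,\pi/2)$; your diagnosis that the naive splitting $f=\tfrac\pi2+g$ is doomed by cancellation is also correct.

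The gap is that the actual estimate is never proved: everything from ``I would then try to reduce the general case to this model'' onward is intention, not argument, and the specific mechanism you propose is the wrong one. For a general measurable $E$ (say a fat Cantor set) there is no distinguished boundary point near which $\arg\Omega$ grows logarithmically, and no winding statement is available; the level set $E_\lambda=\{\widetilde{f\rho_E}>\lambda\}$ may be scattered, and only its \emph{measure} can be estimated. The paper's proof is a global harmonic-measure comparison that never localizes: (1) fix $w\in\D$ with $|\re X(w)|<\pi/2$, which exists because $\re X^*\ge\pi/2$ a.e.\ on $E$ and $\le-\pi/2$ a.e.\ on $\T\setminus E$, both sets of positive measure; (2) introduce the auxiliary domain $G_\lambda=\{|\re z|<\pi/2\}\cup\{|\re z|<\|f\|_\infty,\ \im z>\lambda+1\}$ and its bounded harmonic function $v_\lambda$ equal to $1$ on the part of $\partial G_\lambda$ above height $\lambda+1$ and $0$ on the rest; (3) show that $v_\lambda\circ X$, extended by $0$ off $X^{-1}(G_\lambda)$, is continuous and subharmonic on $\D$, and that its radial boundary values vanish a.e.\ off $E_\lambda$ --- because $\theta\notin E_\lambda$ forces $|\re X^*(\theta)|\ge\pi/2$ and $\im X^*(\theta)\le\lambda$, which lands where $v_\lambda=0$ --- so that the sub-mean-value property at $w$ gives $v_\lambda(X(w))\le C|E_\lambda|$; (4) bound $v_\lambda$ from below on the strip $\{|\re z|<\pi/2\}$, via the maximum principle and vertical translation invariance, by the explicit harmonic function $g_\lambda$ with boundary values $1$ above height $\lambda+2$ and $0$ below, and compute by a conformal map of the strip that $g_\lambda(X(w))=\frac1\pi\arctan\bigl(\cos x/\sinh(\lambda+2-y)\bigr)\ge Ce^{-\lambda}$, where $X(w)=x+iy$. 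Steps (3) and (4) --- converting the hypothesis into a vanishing boundary condition off $E_\lambda$, and extracting the rate $e^{-\lambda}$ from the strip geometry --- are the entire content of the theorem, and neither appears in your proposal.
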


\begin{remark}
In the preceding theorem, the conditions on $E$ and that $f\ge \pi/2$ almost everywhere are optimal---see Remark~\ref{optimal E}.

Notice also that Theorem~\ref{1.0} does not imply Shargorodsky's result mentioned above.
\end{remark}

Previously in~\cite{MR1076949, MR0290089}, sufficient conditions for exponential integrability of $\tilde f$ were obtained in terms of the modulus of continuity of $f$ in $L^p$. In addition to these results and other intrinsic interest~\cite{Garnett, Katznelson, Wolff}, the integrability of the exponential of conjugate functions plays an important role in the spectral theory of Toeplitz and related operators~\cite{PVW13, S94, S07}, scalar Riemann-Hilbert problems \cite{ST08, SV06, V04} and their applications.

\section{Preliminaries}\label{Preliminaries}

As indicated in the introduction, our approach is elementary and based on classical results of complex analysis which are briefly discussed in this section.

\subsection{Poisson integrals}\label{Poisson subsection} For $f\in L^1$, denote by $P[f]$ the Poisson integral of $f$, that is,
\begin{equation}\label{e:Poisson}
	P[f](z) = \frac1{2\pi} \int_0^{2\pi} P_z(\theta) f(\theta)d\theta \qquad(z\in\D),
\end{equation}
where the Poisson kernel $P_z$ is defined by
$$
	P_z(\theta) = \re \frac{e^{i\theta} + z}{e^{i\theta}-z}.
$$
Recall that $P[f]$ is harmonic in $\D$ and if the function $f$ is continuous at $e^{i\theta}$, then
\begin{equation}\label{e:Poisson limit}
	\lim_{z\to e^{i\theta}} P[f](z) = f(\theta)
\end{equation}
(see Theorem~I.1.3 of~\cite{GM}). To deal with discontinuities at $e^{i\theta}$, define a cone $\Gamma_\alpha$ by
$$
	\Gamma_\alpha(e^{i\theta}) 
	= \{ z\in \D : |z-e^{i\theta}|<\alpha(1-|z|)\}
$$
for each $\alpha<1$, and recall that a function $\varphi: \D\to \C$ is said to have nontangential limit $\varphi^*(e^{i\theta})$ at $e^{i\theta}$ if
\begin{equation}\label{e:nontangential}
	\lim_{\Gamma_\alpha(e^{i\theta}) \ni z\to e^{i\theta}} \varphi(z)
	=\varphi^*(e^{i\theta})
\end{equation}
for every $\alpha<1$. Now, for any $f\in L^1$, if $u = P[f]$, then $u^* = f$ almost everywhere by Fatou's theorem.

Define
$$
	X_f(z) = \frac1{2\pi}\int_0^{2\pi} \frac{e^{i\theta}+z}{e^{i\theta}-z} f(\theta)d\theta\qquad (z\in\D).
$$
Then 
\begin{equation}\label{e:X_f}
	u(z) + i\tilde u(z) = X_f(z) = \frac1{2\pi} \int_0^{2\pi} \frac{e^{i\theta}+z}{e^{i\theta}-z} f(\theta)\,d\theta
\end{equation}
for $z\in\mathbb{D}$, where $u = P[f]$ and $\tilde u$ is the harmonic conjugate function of $u$ normalized so that $\tilde u(0)=0$. By Fatou's theorem and Lemma~III.1.1 of~\cite{Garnett},
\begin{equation}\label{e:Lemma III.1.1}
	X_f^*(\theta) = f(\theta) + i \tilde f(\theta)
\end{equation}
for almost every $\theta$. For the integral representation of $\tilde f$ given in~\eqref{e:transformation}, see Lemma~III.1.2 of~\cite{Garnett}. In particular, it follows that the principal value in~\eqref{e:transformation} exists almost everywhere. If $f\in L^\infty$, then
\begin{equation}\label{e:Poisson estimate}
	|\re X_f(z)| = |u(z)| < \|f\|_\infty.
\end{equation}

\subsection{Harmonic and subharmonic functions}  
In one of the key steps of the proof of the main theorem, we consider the Dirichlet problem of finding a unique bounded harmonic function on a simply connected domain $\Omega$ with prescribed boundary values.

If $g$ is a continuous real-valued function on $\partial \Omega$, the Dirichlet problem of finding the bounded harmonic function $u : \Omega\to \R$ such that $u = g$ on $\partial \Omega$ can be solved using the Poisson integral and the Riemann mapping theorem, which reduces the problem to the well-known case of the unit disk (see, e.g.,~\cite{GM}).

However, in our case, since the boundary functions are discontinuous (see~Lemma~\ref{lemmaboundaryvalues} below), the following more general result is needed.

\begin{thmb}\label{maximum principle and uniqueness}
Let $\Omega$ be a simply connected domain and let $g$ be a piecewise continuous function on $\partial\Omega$ with a finite number of discontinuities of the first kind at $\xi_1,\ldots \xi_k$. Then there is at most one bounded harmonic function $h$ on $\Omega$ such that $h=g$ on $\partial\Omega\setminus\{\xi_1,\ldots, \xi_k\}$. 

If such a bounded harmonic function $h$ exists, then 
$$
	\inf_{\zeta\in\partial\Omega\setminus\{\xi_1,\ldots,\xi_k\}} h(\zeta)
	\le h(z) \le \sup_{\zeta\in\partial\Omega\setminus\{\xi_1,\ldots,\xi_k\}} h(\zeta)
$$
for all $z\in \Omega$.
\end{thmb}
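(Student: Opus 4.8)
My plan is to deduce both assertions from a single one-sided maximum principle: if $h$ is bounded and harmonic on $\Omega$ with $\limsup_{z\to\zeta}h(z)\le g(\zeta)$ at every $\zeta\in\partial\Omega\setminus\{\xi_1,\dots,\xi_k\}$, then $h\le S:=\sup_{\zeta}g(\zeta)$ throughout $\Omega$. Applying this to $h$ and to $-h$ gives the two-sided estimate in the statement, and uniqueness follows at once: if $h_1,h_2$ are two bounded harmonic solutions, their difference $w=h_1-h_2$ is bounded, harmonic, and tends to $0$ at every point of $\partial\Omega\setminus\{\xi_1,\dots,\xi_k\}$, so the principle forces $0\le w\le 0$, i.e. $h_1=h_2$. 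Thus everything rests on the one-sided principle, and the only real difficulty is that the hypotheses say nothing about $h$ near the finitely many exceptional points $\xi_j$.

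The idea is to neutralise these points with a harmonic barrier. I would first assume that $\Omega$ is bounded, set $D=\operatorname{diam}\Omega$, and for $\epsilon>0$ put
\[
	v_\epsilon(z)=h(z)-\epsilon\sum_{j=1}^{k}\log\frac{D}{|z-\xi_j|}.
\]
Since each $\xi_j\in\partial\Omega$ lies outside $\Omega$, the function $v_\epsilon$ is harmonic on $\Omega$, and every logarithmic term is nonnegative there because $|z-\xi_j|\le D$. At a non-exceptional boundary point $\zeta$ we have $h(z)\to g(\zeta)$ while the barrier stays nonnegative, so $\limsup_{z\to\zeta}v_\epsilon(z)\le g(\zeta)\le S$; at an exceptional point $\xi_i$ the term $-\epsilon\log\frac{D}{|z-\xi_i|}$ drives $v_\epsilon$ to $-\infty$, so the limit superior there is also $\le S$. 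The classical maximum principle for subharmonic functions on a bounded domain then yields $v_\epsilon\le S$ on $\Omega$, that is, $h(z)\le S+\epsilon\sum_{j}\log\frac{D}{|z-\xi_j|}$; letting $\epsilon\downarrow 0$ with $z$ fixed gives $h(z)\le S$. Running the same argument for $-h$ produces the matching lower bound, and this settles the bounded case.

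The remaining, and genuinely delicate, point is the global geometry when $\Omega$ is unbounded, where the barrier above is no longer bounded below and the point at infinity behaves like an extra uncontrolled boundary point. Here I would invoke the reduction already signalled in Section~\ref{Preliminaries}: by the Riemann mapping theorem choose a conformal bijection $\phi\colon\D\to\Omega$ and pass to $H=h\circ\phi$, which is again bounded and harmonic but now lives on the \emph{bounded} domain $\D$. Harmonicity and boundedness are preserved under $\phi$, the radial limits of $\phi$ exist and lie on $\partial\Omega$ for almost every boundary point, and the finite set $\{\xi_j\}$ pulls back to a set of measure zero on $\partial\D$; consequently the boundary values of $H$ are bounded above by $S$ almost everywhere, and the bounded-domain argument (or directly the Poisson representation of $H$) gives $H\le S$, hence $h\le S$. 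I expect the main obstacle to be precisely this transfer of the boundary data across $\phi$: controlling the radial limits of the conformal map and verifying that the exceptional points remain negligible is where the classical function theory recalled above must be brought to bear.
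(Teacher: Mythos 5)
Your proposal is correct in substance, but it takes a genuinely different route from the paper, because the paper does not prove Theorem~B at all: it simply cites Theorems 5 and 6 of Section~42 of Lavrent'ev--Shabat \cite{MR1087298}. Your argument is a self-contained replacement, and its two halves are sound: the logarithmic barrier $\epsilon\sum_j \log\frac{D}{|z-\xi_j|}$ is nonnegative on a bounded $\Omega$, harmonic there, and blows up at each $\xi_j$, so the classical maximum principle with $\limsup$ boundary conditions applies to $v_\epsilon$ with no information about $h$ near the $\xi_j$ beyond boundedness; and the conformal transfer $H=h\circ\phi$ together with the Poisson representation of bounded harmonic functions on $\D$ handles the unbounded case, which is in fact the case the paper needs (both $G_\lambda$ and the strip $S$ in the proof of Theorem~\ref{1.0} are unbounded). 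What your route buys is transparency about exactly which classical ingredients are used; what the citation buys is brevity. Two points should be sharpened in your write-up. First, the facts you invoke in the transfer step deserve precise names: a univalent $\phi$ on $\D$ lies in $H^p$ for every $p<1/2$ (hence has finite nontangential limits almost everywhere, and these limits lie on $\partial\Omega$), and the set where $\phi^*=\xi_j$ is null by the Privalov--Riesz uniqueness theorem applied to the Nevanlinna-class function $\phi-\xi_j$; without the latter, ``the finite set $\{\xi_j\}$ pulls back to a set of measure zero'' is an assertion, not a proof. Second, your parenthetical suggestion that one could reuse ``the bounded-domain argument'' after transferring to $\D$ does not work: on $\T$ the exceptional set is no longer finite but only of measure zero, so the finite sum of logarithmic barriers is unavailable; it is the Poisson representation $H=P[H^*]$ with $H^*\le S$ almost everywhere that closes the argument. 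Note also that, once the transfer argument is in place, it covers bounded domains as well, so your barrier step, while correct, is logically redundant.
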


For the proof of the preceding result, see Theorems 5 and 6 of Section~42 of~\cite{MR1087298}.

\medskip

Regarding the values of a harmonic function in a domain $\Omega$ (a nonempty open connected set),  we recall the maximum principle (Theorem 1.8 of \cite{Axler}):

\begin{thmc}\label{maxprinciple}
Suppose $\Omega$ is a domain,  
$u$ is real-valued and harmonic on $\Omega$, and $u$ has a maximum or a minimum in $\Omega$. Then $u$ is constant.
\end{thmc}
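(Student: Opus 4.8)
The plan is to prove the maximum principle (Theorem~C) for harmonic functions using the mean value property together with a connectedness argument. First I would recall that harmonic functions satisfy the mean value property: if $u$ is harmonic on $\Omega$ and $\overline{B(z_0,r)}\subset\Omega$, then $u(z_0)$ equals the average of $u$ over the sphere $\partial B(z_0,r)$, and equivalently over the ball $B(z_0,r)$. This is the standard consequence of harmonicity (it follows, for instance, from the Poisson integral representation on disks, which is already in play via Section~\ref{Poisson subsection}).

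Suppose $u$ attains a maximum value $M$ at some interior point, say $u(z_0)=M=\sup_\Omega u$. I would consider the level set $A=\{z\in\Omega : u(z)=M\}$ and argue that $A$ is both open and closed in $\Omega$. Closedness is immediate since $u$ is continuous and $A=u^{-1}(\{M\})$ is the preimage of a closed set. For openness, the key step is the mean value property: if $u(z_1)=M$ and $\overline{B(z_1,r)}\subset\Omega$, then
\begin{equation*}
	M = u(z_1) = \frac{1}{|B(z_1,r)|}\int_{B(z_1,r)} u\,dA \le M,
\end{equation*}
with equality forcing $u\equiv M$ on $B(z_1,r)$, because $M-u\ge 0$ is continuous with vanishing average. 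Hence $A$ is open. Since $\Omega$ is connected and $A$ is nonempty, open, and closed in $\Omega$, we conclude $A=\Omega$, so $u$ is constant. The minimum case follows by applying this argument to $-u$, which is also harmonic.

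The step I expect to be the main technical point is justifying that equality in the mean value inequality forces $u$ to be identically $M$ on the ball, i.e.\ that a nonnegative continuous function with zero integral must vanish identically; this is elementary but is where the continuity of $u$ is genuinely used. The only other ingredient that must be in hand is the mean value property itself, which I would either cite or derive quickly from the Poisson kernel representation $P_z$ introduced in~\eqref{e:Poisson}. With these pieces the connectedness argument is clean and the result follows without further computation.
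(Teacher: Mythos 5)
Your proof is correct and is essentially the argument the paper relies on: the paper does not prove Theorem~C itself but cites Theorem 1.8 of Axler--Bourdon--Ramey, whose proof is exactly this mean-value-property plus open-and-closed (connectedness) argument, including the equality-case step that a nonnegative continuous function with vanishing ball average must vanish identically. Nothing is missing; the reduction of the minimum case to the maximum case via $-u$ is also the standard step.
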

Lemmas~\ref{lemmaboundaryvalues} and \ref{subharmonic function} below are utilized in key steps of the proof of the main result.

\begin{lemma}\label{lemmaboundaryvalues}
Suppose that $f \in L^\infty$ with $f \ge \frac{\pi}{2}$ almost everywhere. Let $\lambda \ge 0$ and 
$$
	G_\lambda = \{ |\re z|<\pi/2\}\cup \{ z : |\re z|<\|f\|_\infty\ {\rm and}\ \im z>\lambda+1\}
$$
be the domain in Figure~\ref{figure} and put $L = \partial G_\lambda \cap \{\im z>\lambda+1\}$. Then there exists a unique bounded harmonic function $v_\lambda$ on $G_\lambda$ with the boundary values
\begin{equation}\label{e:BV}
	v_\lambda(z) = \begin{cases}
	1, & z\in L,\\
	0, & z\in \partial G_\lambda\setminus \left(L\cup \{\pm \|f\|_\infty + i(\lambda+1)\}\right).
	\end{cases}
\end{equation}
\end{lemma}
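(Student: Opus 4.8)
The plan is to separate the two assertions: uniqueness will follow at once from Theorem~\ref{maximum principle and uniqueness}, while existence I would obtain by transporting the Dirichlet problem to the unit disk through a conformal map.

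For uniqueness, first I would record the structure of the prescribed boundary function $g$ in~\eqref{e:BV}. Writing $M=\|f\|_\infty$ (so $M\ge\pi/2$), the set $L$ consists of the two vertical rays $\re z=\pm M$, $\im z>\lambda+1$, where $g=1$; on the rest of $\partial G_\lambda$---the two vertical rays $\re z=\pm\pi/2$, $\im z<\lambda+1$ together with the two horizontal segments at height $\im z=\lambda+1$---we have $g=0$. Thus $g$ is piecewise continuous, and its only discontinuities are the jumps from $0$ to $1$ at the two corners $\xi_\pm=\pm M+i(\lambda+1)$, which are discontinuities of the first kind. At the inner corners $\pm\pi/2+i(\lambda+1)$ the data is locally constant, so no further discontinuity occurs; and since $\infty$ is not a boundary point of $G_\lambda$ in $\C$, the behaviour at the two ends running off to infinity is pinned down by boundedness rather than by the boundary condition. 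Hence Theorem~\ref{maximum principle and uniqueness} applies with $k=2$ and $\{\xi_1,\xi_2\}=\{\xi_+,\xi_-\}$, yielding at most one bounded harmonic function $v_\lambda$ with these boundary values, together with the bound $0\le v_\lambda\le 1$.

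For existence I would fix, by the Riemann mapping theorem, a conformal map $\varphi\colon G_\lambda\to\D$ with inverse $\psi$. Since harmonicity is preserved under conformal change of variable, it suffices to solve the transported problem on $\D$ and then set $v_\lambda=h\circ\varphi$. By Carath\'eodory's prime-end theorem, $\psi$ extends to a homeomorphism of $\overline\D$ onto the prime-end compactification of $G_\lambda$; pulling the data back, $G:=g\circ\psi$ is a bounded function on $\partial\D$, continuous except for jump discontinuities at the two points $\varphi(\xi_\pm)$. Its Poisson integral $h=P[G]$ is then bounded, with $0\le h\le 1$, and harmonic on $\D$, and by the classical boundary behaviour of the Poisson integral (see Section~\ref{Poisson subsection}) it converges to $G(e^{it})$ as $z\to e^{it}$ at every point of continuity of $G$. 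Transporting back through $\varphi$, which extends continuously at each finite boundary point, the function $v_\lambda=h\circ\varphi$ is bounded and harmonic on $G_\lambda$ and attains the values~\eqref{e:BV} at every boundary point other than $\xi_\pm$.

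The main obstacle is the boundary correspondence of $\varphi$ at the two ends of $G_\lambda$ that escape to infinity, rather than at the corners $\xi_\pm$ (which are ordinary accessible boundary points). The point is that the single point $\infty$ carries two \emph{distinct} prime ends---one for the narrow bottom end and one for the wide top end---on which $g$ takes the different boundary values $0$ and $1$, so the naive extension of $\varphi$ to the Euclidean closure is not adequate and the prime-end bookkeeping is genuinely needed. One must check that each of these two ends is a point of continuity of the transported data $G$ and that the convergence of $h$ inside $\D$ really does translate into convergence of $v_\lambda$ to the prescribed value as $z\to\partial G_\lambda$ within $G_\lambda$. A cleaner alternative that sidesteps the prime ends is to define $v_\lambda$ directly as the harmonic measure $\omega(\,\cdot\,,L,G_\lambda)$ and to verify the boundary values by exhibiting a barrier at each boundary point; every such point is regular since $\partial G_\lambda$ locally contains a line segment. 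In either route, uniqueness and the estimate $0\le v_\lambda\le 1$ are supplied by Theorem~\ref{maximum principle and uniqueness}.
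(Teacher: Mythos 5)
Your proof is correct and follows essentially the same route as the paper: uniqueness and the bound $0\le v_\lambda\le 1$ come from Theorem~\ref{maximum principle and uniqueness} (with Theorem~\ref{maxprinciple} sharpening this to strict inequalities), and existence comes from transplanting the Poisson integral of the transported boundary data on $\D$ through a Riemann map of $G_\lambda$ onto $\D$. The only difference is in how the boundary correspondence is justified: where you invoke Carath\'eodory's prime-end theorem (correctly noting that $\infty$ carries two distinct prime ends, each of which is a continuity point of the transported data since the data is locally constant near each end), the paper instead cites Goluzin's Theorem~II.3.4' that each straight-line piece of $\partial G_\lambda$ is mapped to an arc of $\T$, which supplies the same boundary behaviour more directly.
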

\begin{proof}
Let $\tau$ be a conformal map of $G_\lambda$ onto the unit disk $\D$. Then each straight-line piece of the boundary $\partial G_\lambda$ is mapped to an arc on the circle $\T$ (see Theorem II.3.4' of \cite{MR0247039}). Now consider the Poisson integral of the function that equals $1$ on the arcs to which $\tau$ maps $L$ and $0$ on the complementary arcs. The composition of the Poisson integral with $\tau$ gives the desired harmonic function. Uniqueness and the bounds
\begin{equation}\label{e:v bounds}
	0 <  v_\lambda(z) < 1\quad{\rm for}\ z\in G_\lambda
\end{equation}
follow from Theorems~\ref{maximum principle and uniqueness} and \ref{maxprinciple}.
\end{proof}

The following characterization of subharmonic functions is also needed:

\begin{thmd}\label{subharmonic theorem}
Let $u$ be a function on $\D$ and suppose it satisfies the following conditions:
\begin{enumerate}[(i)]
\item $\infty \le u < \infty$, $u\not\equiv -\infty$,
\item $u$ is upper semi-continuous in $\D$,
\item for each $z_0\in\D$, there is an $r_0$ such that $D(z_0,r_0)\subset \D$ and
\begin{equation}\label{e:subharmonic theorem}
	u(z_0) \le \frac1{2\pi} \int_0^{2\pi} u(z_0 + re^{i\theta})\,d\theta
\end{equation}
for all $0\le r\le r_0$. 
\end{enumerate}
Then $u$ is subharmonic in $\D$.
\end{thmd}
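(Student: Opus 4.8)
The plan is to upgrade the local sub-mean value inequality of (iii) to the \emph{global} sub-mean value inequality that, together with (i) and (ii), characterizes subharmonicity; the engine throughout is a maximum principle for upper semi-continuous functions satisfying (iii). (Here I read (i) as $-\infty\le u<\infty$.) I would first prove a strong maximum principle: if $u$ is upper semi-continuous on a domain $\Omega$, satisfies (iii), and attains its supremum $M=\sup_\Omega u$ (necessarily $M<\infty$) at an interior point, then $u\equiv M$. The set $A=\{u=M\}$ is relatively closed, since $\{u\ge M\}$ is closed by upper semicontinuity and equals $A$. To see that $A$ is open, fix $z_0\in A$; combining (iii) with $u\le M$ gives, for $0\le r\le r_0$,
\[
	M=u(z_0)\le \frac1{2\pi}\int_0^{2\pi} u(z_0+re^{i\theta})\,d\theta\le M,
\]
so equality holds and the nonnegative integrand $M-u$ vanishes a.e.\ on each such circle; by Fubini $u=M$ a.e.\ on $D(z_0,r_0)$. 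For any $w$ in this disk the full-measure set $\{u=M\}$ is dense, so $\limsup_{z\to w}u(z)\ge M$, and upper semicontinuity forces $u(w)\ge M$, whence $u(w)=M$. Thus $D(z_0,r_0)\subset A$, and connectedness yields $A=\Omega$.

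From this I would derive the comparison (boundary) form: if $D$ is a bounded domain, $u$ is upper semi-continuous and satisfies (iii) on $D$, and $\limsup_{z\to\zeta}u(z)\le m$ for every $\zeta\in\partial D$, then $u\le m$ on $D$. Indeed, setting $M'=\sup_D u$ and taking $z_n$ with $u(z_n)\to M'$, compactness of $\overline{D}$ produces a limit point $z^*$; if $z^*\in\partial D$ the boundary hypothesis gives $M'\le m$, while if $z^*\in D$ upper semicontinuity makes the supremum attained, so $u$ is constant $=M'$ by the strong maximum principle and again $M'\le m$.

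Finally, to obtain the global sub-mean value inequality I would solve the Dirichlet problem with merely upper semi-continuous data by monotone approximation. Given $\overline{D(z_0,R)}\subset\D$, choose continuous $\phi_n\downarrow u$ on the circle $|z-z_0|=R$ (possible since $u$ is upper semi-continuous and bounded above there) and let $h_n=P[\phi_n]$ be the corresponding Poisson integrals, which are harmonic and continuous up to the boundary. Because $h_n$ satisfies the mean value equality, $u-h_n$ inherits property (iii) and is upper semi-continuous, and $\limsup_{z\to\zeta}(u-h_n)(z)\le u(\zeta)-\phi_n(\zeta)\le 0$; the comparison principle then gives $u\le h_n$ on $D(z_0,R)$, so $u(z_0)\le h_n(z_0)=\frac1{2\pi}\int_0^{2\pi}\phi_n(z_0+Re^{i\theta})\,d\theta$. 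Letting $n\to\infty$ and invoking monotone convergence (applied to the nonnegative increasing sequence $\phi_1-\phi_n$) yields
\[
	u(z_0)\le \frac1{2\pi}\int_0^{2\pi} u(z_0+Re^{i\theta})\,d\theta
\]
for every admissible $R$, which together with (i) and (ii) is precisely subharmonicity.

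I expect the main obstacle to be the careful handling of the value $-\infty$ and of the only upper semi-continuous boundary data: keeping the integrals meaningful (they lie in $[-\infty,\infty)$), justifying the approximation $\phi_n\downarrow u$ and the limit passage, and---most delicately---using upper semicontinuity at exactly the right moment in the strong maximum principle to promote ``$u=M$ almost everywhere near $z_0$'' to ``$u=M$ on a full neighborhood,'' which is what makes the maximal set open and thereby drives the whole argument.
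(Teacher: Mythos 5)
The paper contains no proof of this statement: Theorem D is imported verbatim from Tsuji (Theorem~II.13 of the cited \emph{Potential Theory in Modern Function Theory}) and used as a black box in Lemma~\ref{subharmonic function}. Your proposal is a correct, self-contained reconstruction of the standard argument behind that reference, and all three of its stages are sound: the strong maximum principle (where you correctly identify and handle the delicate point --- the sub-mean inequality only gives $u=M$ almost everywhere on small circles, and upper semicontinuity is what promotes this to $u=M$ on a full neighborhood, making $\{u=M\}$ open); the boundary comparison principle on bounded domains; and the Poisson-modification step, where approximating the upper semi-continuous boundary data by a decreasing sequence of continuous functions $\phi_n\downarrow u$, applying the comparison principle to $u-P[\phi_n]$, and passing to the limit by monotone convergence yields the global sub-mean inequality (equivalently, domination by harmonic majorants on disks). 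Each ingredient is handled correctly, including the possibility that $u=-\infty$ on part or all of a circle, and your reading of condition (i) as $-\infty\le u<\infty$ fixes an evident typo in the statement. The only remark worth adding is that your final sentence implicitly identifies subharmonicity with the global circle-mean inequality; if one instead takes the harmonic-comparison definition (as Tsuji does), your Step 3 already delivers it directly, since the comparison principle applies verbatim to $u-h$ for any harmonic $h$ continuous up to the boundary of a compactly contained subdomain.
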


For the proof of the preceding result, see Theorem~II.13 of~\cite{MR0414898}.

\begin{lemma}\label{subharmonic function}
Let $\lambda, f, G_\lambda$, and $v_\lambda$ be as in Lemma \ref{lemmaboundaryvalues} and $E \subset \T$ be a measurable set with $0<|E|<2\pi$. Define
$$
	X(z) =  \frac1{2\pi} \int_0^{2\pi} \frac{e^{i\theta}+z}{e^{i\theta}-z}
	\big(f\rho_E\big)(\theta)\,d\theta \qquad (z\in\D),
$$
where $\rho_E$ is defined in~\eqref{e:rho_E}. Let $\Omega_\lambda = X^{-1}(G_\lambda)$ and define $H_\lambda : \D\to\C$ by
$$
	H_\lambda(z) = \begin{cases}
	v_\lambda \circ X, & z\in \Omega_\lambda\\
	0, & z\in \D\setminus \Omega_\lambda.
	\end{cases}
$$
Then $H_\lambda$ is continuous and subharmonic with $0\le H_\lambda < 1$.
\end{lemma}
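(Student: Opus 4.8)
The plan is to establish the two bounds and continuity first, and then derive subharmonicity from the characterization in Theorem~\ref{subharmonic theorem}. The bounds are immediate: on $\Omega_\lambda$ we have $H_\lambda=v_\lambda\circ X$ with $0<v_\lambda<1$ by \eqref{e:v bounds}, while $H_\lambda\equiv 0$ off $\Omega_\lambda$; hence $0\le H_\lambda<1$ on all of $\D$. Since $X=u+i\tilde u$ is holomorphic by \eqref{e:X_f} and $v_\lambda$ is harmonic, the composition $v_\lambda\circ X$ is harmonic on the open set $\Omega_\lambda=X^{-1}(G_\lambda)$, and $H_\lambda$ is harmonic (indeed identically $0$) on the interior of $\D\setminus\Omega_\lambda$. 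The only delicate set is the relative boundary $\partial\Omega_\lambda\cap\D$.

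The key observation handling this boundary is the \emph{strict} inequality $|\re X(z)|<\|f\|_\infty$ for every $z\in\D$, which follows from \eqref{e:Poisson estimate} applied to $f\rho_E$, noting that $\|f\rho_E\|_\infty=\|f\|_\infty$. Fix $z_0\in\partial\Omega_\lambda\cap\D$. As $z_0$ is a boundary point of the open set $\Omega_\lambda$, we have $X(z_0)\notin G_\lambda$, and since $z_0$ is a limit of points of $\Omega_\lambda$, continuity of $X$ gives $X(z_0)\in\overline{G_\lambda}$; thus $X(z_0)\in\partial G_\lambda$. Now the set $L$ and the two exceptional corner points $\pm\|f\|_\infty+i(\lambda+1)$ all lie on the vertical lines $\re z=\pm\|f\|_\infty$, so the strict bound rules them out. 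Hence $X(z_0)$ lies on the part of $\partial G_\lambda$ carrying boundary value $0$, where $v_\lambda$ extends continuously to $0$ (a regular boundary point with continuous data, via the conformal construction of Lemma~\ref{lemmaboundaryvalues}). Consequently, whether $z\to z_0$ through $\Omega_\lambda$ (so $X(z)\to X(z_0)$ inside $G_\lambda$ and $v_\lambda(X(z))\to 0$) or through $\D\setminus\Omega_\lambda$ (where $H_\lambda=0$), we obtain $H_\lambda(z)\to 0=H_\lambda(z_0)$. This gives continuity on all of $\D$.

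For subharmonicity I would invoke Theorem~\ref{subharmonic theorem}. Continuity yields upper semicontinuity, and $0\le H_\lambda<1$ gives condition (i). For the sub-mean-value inequality~\eqref{e:subharmonic theorem} I split into three cases. If $z_0\in\Omega_\lambda$, then $H_\lambda$ is harmonic on a disk about $z_0$ and the mean-value equality holds; if $z_0$ lies in the interior of $\D\setminus\Omega_\lambda$, then $H_\lambda\equiv 0$ near $z_0$ and the inequality is trivial. Finally, if $z_0\in\partial\Omega_\lambda\cap\D$, then $H_\lambda(z_0)=0$, so the required inequality reduces to $0\le\frac1{2\pi}\int_0^{2\pi}H_\lambda(z_0+re^{i\theta})\,d\theta$, which is automatic from $H_\lambda\ge 0$. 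Thus $H_\lambda$ is subharmonic on $\D$.

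I expect the main obstacle to be the boundary-continuity step, specifically the justification that $v_\lambda\to 0$ at the finite boundary points of $G_\lambda$ that carry zero data. This is exactly where the strict estimate $|\re X|<\|f\|_\infty$ is essential: it is what keeps the image $X(\D)$ away from the value-$1$ part of $\partial G_\lambda$, and one must also confirm that these target points are regular for the Dirichlet problem so that the continuous boundary data is genuinely attained. Everything else follows from the nonnegativity of $H_\lambda$ together with the fact that a harmonic function composed with a holomorphic one is harmonic.
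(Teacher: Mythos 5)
Your proof is correct and follows essentially the same route as the paper's: both use the strict bound $|\re X|<\|f\|_\infty$ to show that $X$ maps $\partial\Omega_\lambda\cap\D$ into the part of $\partial G_\lambda$ carrying boundary value $0$ (ruling out $L$ and the corner points), deduce continuity of $H_\lambda$ from the continuity of $v_\lambda$ up to those boundary points, and verify subharmonicity via Theorem~\ref{subharmonic theorem} using harmonicity of $v_\lambda\circ X$ on $\Omega_\lambda$ and the fact that $H_\lambda\ge 0$ vanishes off $\Omega_\lambda$. The only differences are presentational: you treat $\partial\Omega_\lambda\cap\D$ as a separate third case in the mean-value check (the paper folds it into $\D\setminus\Omega_\lambda$) and you flag explicitly the regularity of the zero-data boundary points, which the paper handles implicitly through the conformal construction of Lemma~\ref{lemmaboundaryvalues}.
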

\begin{proof}
First, if $z\in \partial \Omega_\lambda\cap \D$, then $X(z)\notin G_\lambda$ but $X(z) \in \overline{G_\lambda\cap X(\D)}$, and hence $X(z) \in \partial G_\lambda$. Further, since $|\re X(z)|<\|f\|_\infty$ and $z\in \D$, $X(z)\in \{x+iy : |x|<\|f\|_\infty,\ y\le\lambda+1\}$ and so $v_\lambda(X(z)) = 0$ according to~\eqref{e:BV}). If $z_k\to z$ in $\Omega_\lambda$, then, using the continuity of $v_\lambda$ up to the point $X(z)$,  
$v_\lambda(X(z_k))\to 0$, which implies that $H_\lambda$ is continuous on $\D$. That $0\le H_\lambda < 1$ follows from $0 < v_\lambda < 1$ (see~\eqref{e:v bounds}) and the definition of the function $H_\lambda$.

The local mean value inequality in~\eqref{e:subharmonic theorem} holds for each point of $\Omega_\lambda$ because $H_\lambda=v_\lambda\circ X$ is harmonic in $\Omega_\lambda$, and it holds for each point of $\D\setminus\Omega_\lambda$ because $H_\lambda\ge 0$ equals zero there. Thus, by Theorem~\ref{subharmonic theorem}, the function $H_\lambda$ is subharmonic in $\D$.
\end{proof}

\section{Proof of Theorem \ref{1.0}}\label{Proofs of the main results}

Suppose that $f\in L^\infty$ with $f\ge\pi/2$ almost everywhere and $0<|E|<2\pi$. Let
$$
	X(z) =  \frac1{2\pi} \int_0^{2\pi} \frac{e^{i\theta}+z}{e^{i\theta}-z}
	\big(f\rho_E\big)(\theta)\,d\theta \qquad (z\in\D),
$$
where $\rho_E$ is defined in~\eqref{e:rho_E}. Then 
$$
	X^* = f \rho_E + i\widetilde{f\rho_E}
$$
almost everywhere according to~\eqref{e:Lemma III.1.1}.

\medskip

The proof of \eqref{e:1.2} consists of several steps.

\begin{step}\label{step1} 
Since $\re X^* = f\rho_E \ge \pi/2$ almost everywhere on $E$ and $\re X^* \le -\pi/2$ almost everywhere on $\T\setminus E$, there is a point $w\in\D$ such that $|\re X(w)|<\pi/2$. \end{step}

\begin{step} 
 
Let the open set $G_\lambda$ (see Figure~\ref{figure}) and $L = \partial G_\lambda \cap \{\im z>\lambda+1\}$ be defined as in Lemma \ref{lemmaboundaryvalues}. 
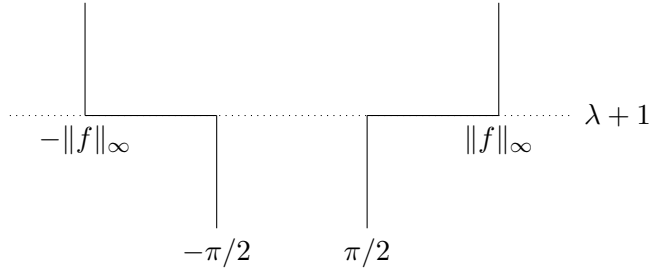
\begin{figure}[H]
\begin{tikzpicture}
\draw (2, 1.5) -- (2,0) node[anchor=north] {$-\pi/2$};
\draw (2, 1.5) -- (0.25, 1.5);
\draw (0.25, 3) -- (0.25, 1.5) node[anchor=north] {$-\|f\|_\infty$};

\draw (4,1.5) -- (4,0) node[anchor=north] {$\pi/2$};
\draw (4,1.5) -- (5.75,1.5);
\draw (5.75,3) -- (5.75,1.5) node[anchor=north] {$\|f\|_\infty$};

\draw[dotted] (-0.75,1.5) -- (6.75,1.5); 
\node[right] at (6.75, 1.5) {$\lambda+1$};
\end{tikzpicture}\caption{The open set $G_\lambda$}\label{figure}
\end{figure}

Then Lemma \ref{lemmaboundaryvalues} asserts that there exists 
a unique bounded harmonic function $v_\lambda$ on $G_\lambda$ with the boundary values
\begin{equation*}
	v_\lambda(z) = \begin{cases}
	1, & z\in L,\\
	0, & z\in \partial G_\lambda\setminus \left(L\cup \{\pm \|f\|_\infty + i(\lambda+1)\}\right).
	\end{cases}
\end{equation*}
\end{step}

\begin{step} Let $\Omega_\lambda = X^{-1}(G_\lambda)$. Then, according to Lemma \ref{subharmonic function}, the function $H_\lambda : \D\to\C$ defined by
$$
	H_\lambda(z) = \begin{cases}
	v_\lambda \circ X, & z\in \Omega_\lambda\\
	0, & z\in \D\setminus \Omega_\lambda
	\end{cases}
$$
is continuous and subharmonic with $0\le H_\lambda < 1$.

\end{step}

\begin{step}
There is a $C>0$, independent of $\lambda$, such that
\begin{equation}\label{e:vX}
	|E_\lambda| \ge C(v_\lambda \circ X)(w),
\end{equation}
where $E_\lambda$ is defined by \eqref{e:1.2} and $w$ is defined in Step 1.   
Since $H_\lambda$ is bounded, it trivially has a harmonic majorant, and hence, by Theorem I.6.7 of \cite{Garnett},
\begin{equation}\label{e:H estimate}
	H_\lambda(z) \le 
	\lim_{r\to 1} \frac1{2\pi} \int_0^{2\pi} P_z(\theta) H_\lambda(re^{i\theta})\,d\theta
\end{equation}
for $z\in\D$. It follows from~\eqref{e:Lemma III.1.1} that, for almost every $\theta\in [0,2\pi]\setminus E_\lambda$,  
$$
	|\re X^*(\theta)| = |f(\theta)|\ge \pi/2\quad{\rm and}\quad \im X^*(\theta) \le \lambda.
$$
Therefore, using the definition of $H_\lambda$ and the properties of $v_\lambda,$
$$
	\lim_{r\to 1} H_\lambda(re^{i\theta})= v_\lambda\left(X^*(\theta)\right)=0.$$ 
Now, by \eqref{e:H estimate} and Lebesgue's dominated convergence theorem,
\begin{align*}
	H_\lambda(w) &\le \frac1{2\pi} \int_{E_\lambda} P_w(\theta)\,d\theta 
	+ \frac1{2\pi} \int_{[0,2\pi]\setminus E_\lambda} P_w(\theta)  \lim_{r\to 1} H_\lambda(re^{i\theta})\,d\theta\\
	&=\frac1{2\pi} \int_0^{2\pi} P_w(\theta) \chi_{E_\lambda}(\theta)\,d\theta
	\le C \int_0^{2\pi} \chi_{E_\lambda}(\theta)\,d\theta = C|E_\lambda|,
\end{align*}
where the constant is independent of $\lambda$.
\end{step}

\begin{step} 
It is difficult to obtain the desired estimate $(v_\lambda \circ X)(w) \ge Ce^{-\lambda}$ directly. Instead we estimate $v_\lambda$  
from below by another harmonic function $g_\lambda$, defined on a vertical strip, which allows us to compute $g_\lambda$ explicitly in the next step. 

Let $g_\lambda$ be the bounded harmonic function in the strip $S=\{|\re z|<\pi/2\}$ with the boundary values
\begin{equation*}
	g_\lambda(\pm \pi/2 + iy) = \begin{cases}
	1, & y>\lambda+2\\
	0, & y<\lambda+2.
	\end{cases}
\end{equation*}
Notice that $\lim_{y\to\infty} v_0(\pm\pi/2 + i y) = 1$ (which can be verified using a conformal map of $\Omega_0$ onto $\D$). Therefore, since $v_0$ is continuous,  
Theorem~\ref{maxprinciple} implies that there is a positive constant $C$ such that 
$$
	v_0(\pm \pi/2 + iy) \ge C \ge Cg_0(\pm\pi/2+iy)
$$
for all $y>2$. Clearly $v_0(\pm\pi/2+iy) \ge 0 = Cg_0(\pm\pi/2+iy)$ for all $y<2$. Thus, by Theorem~\ref{maximum principle and uniqueness}, $v_0(z) - Cg_0(z) \ge 0$ for all $z\in S$, and so for $\lambda>0$ and $z\in S$,
$$
	v_\lambda(z) = v_0(z-i\lambda) \ge Cg_0(z-i\lambda) = Cg_\lambda(z). 
$$
Consequently, by \eqref{e:vX},
$$
	|E_\lambda| \ge C(g_\lambda\circ X)(w),
$$
where the constant $C$ is independent of $\lambda$.
\end{step}

\begin{step}
Let $\tau=x+iy = X(w)$, so $|x|<\pi/2$ and $y\in\R$. We show that there is a constant $C>0$, independent of $\lambda$, such that $g_\lambda(\tau)\ge Ce^{-\lambda}$, which completes the proof.

$$F(z) = \tan \tfrac{1}{2} z = -i\left(\frac{e^{\frac{1}{2}iz}-e^{-\frac{1}{2}iz}}{e^{\frac{1}{2}iz}+e^{-\frac{1}{2}iz}}\right)$$ defines a conformal mapping from $S$ onto $\D$ with $F(ib)= i \tanh(\frac{1}{2}
b) \to \pm i,$ as $b \to \pm\infty$. 
Notice that
$$
	F^{-1}(z) = 2\arctan z = i\log\frac{1-iz}{1+iz}
$$
for $z\in\D$. Thus, using the mapping $z\mapsto -iz$, we see that
$$
	z\mapsto i\log \frac{1-\tan\frac{z}2}{1+\tan\frac{z}2} 
	= -i\log \frac{1+\tan\frac{z}2}{1-\tan\frac{z}2}
$$
maps $S$ conformally onto itself with $\pm \frac\pi2 \mapsto (0,\mp \infty)$ and $(0,\infty)\mapsto \frac\pi2$. Therefore,
\begin{align*}
	g_\lambda(z)
	&=\frac12 + \frac1\pi \re\left(-i\log \frac{1+\tan\frac{z-i(\lambda+2)}2}{1-\tan\frac{z-i(\lambda+2)}2}\right)\\
	&=\frac12 + \frac1\pi\arg \frac{1+\tan\frac{z-i(\lambda+2)}2}{1-\tan\frac{z-i(\lambda+2)}2}.
\end{align*}
To evaluate $g_\lambda$ at $\tau=x+iy$, write $\tau_\lambda = \frac{\tau-i(\lambda+2)}2$ and notice that
\begin{align*}
	g_\lambda(\tau)
	&=\frac12 + \frac1\pi\arg \frac{1+\tan \tau_\lambda}{1-\tan \tau_\lambda}
	=\frac12+\frac1\pi\arg \tan (\tau_\lambda+\pi/4)\\
	&=\frac12 + \frac1\pi\arctan\frac{\sinh(y-(\lambda+2))}{\sin (x+\pi/2)}
	=\frac12-\frac1\pi\arctan \frac{\sinh (\lambda+2-y)}{\cos x}
\end{align*}
using the formula $\arg \tan(a+ib) = \arctan\frac{\sinh 2b}{\sin 2a}$ and the fact that $\arctan$ is odd. 
Observe also that the expression for $g_\lambda(\tau)$ is valid in $S$, but not necessarily on $\partial S$ and the first expression for $g_\lambda(z)$ is valid in $\bar{S}$. We first established the expression for $g_\lambda(z)$ with the desired boundary behavior on $\partial S$ and then considered only the behavior inside $S$, where the expressions for $g_\lambda(z)$ and $g_\lambda(\tau)$ coincide. 
Since $\arctan a = \frac\pi2-\arctan a^{-1}$ for $a>0$, we have, for $\lambda+2>y$,
$$
	g_\lambda(\tau) = \frac1{\pi} \arctan\frac{\cos x}{\sinh(\lambda+2-y)},
$$
which gives the estimate. This completes the proof of \eqref{e:1.2}.
\end{step}

It remains to prove~\eqref{e:main2}. As in Section 4 of Chapter I in \cite{Garnett}, consider the distribution function
$$
	m(\lambda) = \left| \left\{ t : \exp\left(\widetilde{f\rho_E}(t)\right)>\lambda\right\} \right| 
$$
for $\lambda>0$. Notice that for $\lambda\geq 1$,
$$
	m(\lambda) = \left| E_{\log \lambda} \right|
$$
(see~\eqref{e:1.2} for the definition of $E_{\log \lambda}$), and so by Lemma I.4.1 in \cite{Garnett},
\begin{equation}\label{e:d}
	\int \exp(\widetilde{f\rho_E}(\theta))\,d\theta = \int_0^\infty m(\lambda)\, d\lambda 
	\ge \int_1^\infty \left|E_{\log\lambda}\right| d\lambda.
\end{equation}
It remains to combine \eqref{e:1.2} with \eqref{e:d}.

\section{Further remarks}

In this section we provide remarks and examples related to Theorem~\ref{1.0}. 
We show first that the conditions in the theorem 
are optimal.

\begin{remark}\label{optimal E}
In Theorem~\ref{1.0}, 
(i) the condition on $E$ is optimal, and (ii) the condition that $f\ge \pi/2$ almost everywhere is optimal.
\end{remark}
\begin{proof} 
(i) Let $E=[0,2\pi]$ and $f=\pi/2$ on $E$. Then $f\rho_E = f$ and so trivially $e^{\widetilde{f\rho_E}}\in L^1$ by \eqref{e:continuous}.

Further, by Corollary III.1.8 of \cite{Katznelson}, 
\begin{equation}\label{e:upper bound}
	|E_\lambda| = \left| \{ t : |\widetilde{f\rho_E}(e^{it})|>\lambda\}\right| 
	= \left| \{ t : |\tilde\chi_E(e^{it})|> 2\lambda/\pi\}\right|
	< 10\pi e^{-2\lambda},
\end{equation}
which implies that there is no constant $C$ for which \eqref{e:1.2} holds in this case.

(ii) Suppose that $\pi/4<f<\pi/2$ on an interval $I=(a,b)\subset (0,2\pi)$ and $f=\pi/2$ on $[0,2\pi]\setminus I$. Let $E = [0,a+\delta] \cup [b-\delta,2\pi]$ for some small $\delta>0$. Then $\exp(\widetilde{f\rho_E})$ is integrable by Zygmund's Theorem~A because the gap in the essential range of $f\rho_E$ is strictly less than $\pi$. Consequently, the condition is optimal for \eqref{e:main2}, and it must also be optimal for \eqref{e:1.2} because it was used to prove \eqref{e:main2}. 
\end{proof}

\begin{remark}
In addition to the example in the previous proof, there are functions $f$ which are not constant and still satisfy $|E_\lambda| \lesssim e^{-\lambda}$ as in~\eqref{e:upper bound}. Indeed, let $0<|E|<2\pi$ and let $f$ be H\"older with $f(\theta) = \pi/2$ for all $\theta\in \partial E$. If $g=(f-\pi/2)\rho_E$, then it is not difficult to see that $g$ is H\"older and hence $\tilde g$ is H\"older. Also,
$$
	\widetilde{f\rho_E} - \tilde g = \tfrac\pi2\tilde\rho_E.
$$
Thus, by Exercise~VI.18 of~\cite{Garnett},
$$
	\left| \{ t : |\widetilde{f\rho_E}(t)-\tilde g(t)|>\lambda\}\right|
	=\left| \{ t : |\tfrac\pi2\tilde\rho_E(t)|>\lambda\}\right| 
	\lesssim e^{-\lambda},
$$
which implies that $|E_\lambda| = \left| \{ t : |\widetilde{f\rho_E}(t)|>\lambda\}\right| \lesssim e^{-\lambda+\|\tilde g\|_\infty} \le Ce^{-\lambda}$ for some constant $C$.

In fact, (0.2) of~\cite{Wolff}, when $\psi = \widetilde{f\rho_E} = \tilde g + \frac\pi2\tilde\rho_E$, implies the stronger result
\begin{equation}\label{e:Wolff}
	\sup_I \frac{\left|\{ t\in I : |\psi(t) - \psi_I|>\lambda\}\right|}{|I|} \lesssim e^{-\lambda}.
\end{equation}
Notice that the converse is not true, however, that is,~\eqref{e:Wolff} does not imply that 
\begin{equation}\label{e:Wolff0.2}
	\psi = u + \tilde v,\quad u\in L^\infty, \|v\|_\infty\le\pi/2
\end{equation}
(see Wolff's counterexample on page 52 of~\cite{Wolff}). 
\end{remark}

\begin{open problem}
In the preceding remark, when $\psi = \frac12\log |H|$ with $H$ univalent and zero free, Baernstein~\cite{Baernstein, Wolff} posed a question of whether~\eqref{e:Wolff} implies~\eqref{e:Wolff0.2}. This seems to be still open.
\end{open problem}

Our next example concerns outer functions from the theory of Hardy spaces. Recall that an outer function is a function $G$ on the unit disk which can be written in the form 
\begin{equation}
	G(z) = \alpha \, \exp\left(\frac{1}{2\pi} \int_0^{2\pi}\frac{e^{it}+z}{e^{it}-z}
	\log\varphi(t)\, dt\right), \quad \alpha \in \C, \, z \in \D,
\end{equation}
where $|\alpha| = 1$ and $\varphi$ is a positive measurable function on $\T$ such that $\log \varphi\in L^1$. Similarly to~\eqref{e:X_f},
\begin{equation}\label{e:outer2}
	G(z) =e^{u(z)+iv(z)}, \quad z \in \D,
\end{equation}
where $u$ is the Poisson integral of $\log \varphi$ and $v$ is the harmonic conjugate function of $u$ so that $e^{iv(0)}=\alpha$.

Let $0<p<\infty$ and let $f$ be analytic in $\D$. Then the function $f$ is in the Hardy space $H^p$ if
$$
	\sup_{r<1} \int_0^{2\pi} |f(re^{it})|^p dt < \infty.
$$
Notice that $G\in H^p$ if and only if $\varphi\in L^p$ (see, e.g., Section II.4 of~\cite{Garnett}). We can now use Theorem~\ref{1.0} to determine when certain outer functions are not in $H^p$ as shown in the following example.

\begin{example}
Given a real-valued function $f$ in $L^\infty$, define 
$$
	\Phi_f(z) = \exp\left(\frac{i}{4\pi} \int_0^{2\pi} \frac{e^{it} + z}{e^{it}-z} f(t)\, dt\right), \quad |z|<1.
$$
Using~\eqref{e:outer2}, it is easy to see that $\Phi_f$ and its inverse $\Phi_f^{-1}$ are both outer functions. 
Denote by $\mathcal{R}(f)$ the essential range of $f$ as before. Let 
$$
	I\subset [\essinf_{t\in\T} f(t), \esssup_{t\in \T} f(t)]
$$
be an interval such that $I\cap \mathcal{R}(f) = \emptyset$ and $|I| \geq \frac{2\pi}p$. Then $\Phi_f, \Phi_f^{-1}\notin H^p$. In particular, $\Phi_f^{\pm 1} \notin H^2$ if $\mathcal{R}(f)$ has a gap of length $\ge \pi$.

To see this, notice first that (similarly to \eqref{e:Lemma III.1.1})
$$
	(\Phi_f^{\pm 1})^*(t) = \exp\left(\mp \frac12 \left(\tilde f(t) - if(t)\right)\right),
$$
so $|(\Phi_f^{\pm 1})^*(t)| = e^{\mp\frac12\tilde f(t)}$. Since $\Phi_f$ is an outer function, it follows from Theorem~\ref{1.0} that $\Phi_f, \Phi_f^{-1}\notin H^p$ if $\frac{p}2|I|\ge \pi$.
\end{example}

To illustrate the effect of jumps in relation to exponential integrability (see Example~\ref{jump of pi}), the following lemma will be needed. Its proof is included for completeness because we have not found it in the literature.

\begin{lemma}\label{series}
For $0<x<2\pi$, let
\begin{equation*}
	g(x) = \sum_{n=2}^\infty \frac{\cos nx}{n\log n}.
\end{equation*}
Then 
\begin{equation}\label{e2:series}
	g(x) = \log\log \frac1x + B + O\left((\log\log\frac1x)^{1/3}(\log \frac1x)^{-2/3}\right)
\end{equation}
as $x\to 0+$, where $B$ is a positive constant.
\end{lemma}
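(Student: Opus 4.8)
The plan is to locate the main term by separating the frequencies $n$ for which $\cos nx$ is essentially in phase (roughly $n\lesssim 1/x$) from the higher frequencies, where the cosines oscillate and cancel. Writing $L=\log\frac1x$ for brevity, I fix a cutoff $N=\lfloor 1/x\rfloor$ and split
$$
	g(x)=\sum_{n=2}^{N}\frac{1}{n\log n}-\sum_{n=2}^{N}\frac{1-\cos nx}{n\log n}+\sum_{n=N+1}^{\infty}\frac{\cos nx}{n\log n}=A-R_1+R_2 .
$$
The first sum $A$ should produce \emph{both} the divergent main term and the additive constant, while $R_1$ and $R_2$ will be shown to tend to $0$, so that the constant $B$ in the statement is exactly $c_0:=\lim_{M\to\infty}\big(\sum_{n=2}^{M}\frac1{n\log n}-\log\log M\big)$.

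To handle $A$ I would compare with $\int_2^{N}\frac{dt}{t\log t}=\log\log N-\log\log 2$. Since $t\mapsto 1/(t\log t)$ is decreasing, Euler--Maclaurin (or a direct monotone comparison, bounding each defect $\frac1{n\log n}-\int_n^{n+1}\frac{dt}{t\log t}$ by $|f'(n)|$) gives $A=\log\log N+c_0+O\big(1/(N\log N)\big)$ with $c_0$ finite. As $N=\lfloor 1/x\rfloor$, one has $\log\log N=\log\log\frac1x+O\big(x/L\big)$, so $A=\log\log\frac1x+c_0+(\text{lower order})$. The estimate of $R_1$ is routine: from $0\le 1-\cos nx\le\tfrac12(nx)^2$ and $n\le N\le 1/x$ we get $R_1\le\tfrac12 x^2\sum_{n=2}^{N}\frac{n}{\log n}=O\big(x^2N^2/\log N\big)=O\big(1/L\big)$, using $\sum_{n\le N}\frac{n}{\log n}\sim \frac{N^2}{2\log N}$.

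The genuinely delicate piece is the oscillatory tail $R_2$, and this is where I expect the \textbf{main obstacle}. Here I would use summation by parts together with the Dirichlet-kernel bound $\big|\sum_{n=a}^{b}\cos nx\big|\le 1/\sin(x/2)\le \pi/x$, valid for $0<x<\pi$; since $1/(n\log n)$ is monotone this yields $|R_2|\le\frac{\pi}{x}\cdot\frac{1}{(N+1)\log(N+1)}=O\big(1/L\big)$. Combining the three estimates gives $g(x)=\log\log\frac1x+c_0+O\big(1/L\big)$, which is already at least as strong as the asserted bound, since $1/\log\frac1x=o\big((\log\log\frac1x)^{1/3}(\log\frac1x)^{-2/3}\big)$; the weaker exponents in the statement presumably reflect a more economical but cruder treatment, e.g.\ balancing the cutoff $N$ against tail bounds that do not exploit the full Dirichlet cancellation, or a coarser control of the transition region $n\approx 1/x$.

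Finally, positivity of the constant is easy once $A$ is understood. Because $R_1,R_2\to 0$, the constant equals $c_0$, and the same monotone comparison $\frac{1}{n\log n}\ge\int_n^{n+1}\frac{dt}{t\log t}$ gives $\sum_{n=2}^{M}\frac1{n\log n}\ge\log\log(M+1)-\log\log 2\ge\log\log M-\log\log 2$, whence $c_0\ge-\log\log 2>0$ (note $\log 2<1$). The only steps requiring real care are the tail bound $R_2$ and, if the precise error exponents are insisted upon, the bookkeeping near $n\approx 1/x$. As an alternative route that also yields $B$ in closed form, one may use $\frac1{\log n}=\int_0^\infty n^{-t}\,dt$ to write $g(x)=\int_0^\infty\big(\re\,\mathrm{Li}_{1+t}(e^{ix})-\cos x\big)\,dt$ and insert the singular expansion of the polylogarithm near its branch point, but the elementary split above is cleaner and self-contained.
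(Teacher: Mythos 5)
Your proof is correct, and it shares the paper's skeleton --- a head/tail split at a cutoff $N$, Abel summation with the Dirichlet-kernel bound $\bigl|\sum_{n=a}^{b}\cos nx\bigr|\le 1/\sin(x/2)\le \pi/x$ for the tail, and the Apostol asymptotics $\sum_{n=2}^{N}\frac{1}{n\log n}=\log\log N+B+O\bigl(1/(N\log N)\bigr)$ for the main term --- but your treatment of the head is genuinely sharper, and this changes the outcome. The paper bounds the head below by $\cos(Nx)\sum_{n=2}^{N}\frac{1}{n\log n}$, i.e.\ it charges every term the worst-case loss $1-\cos Nx$, giving an error $O\bigl(x^{2}N^{2}\log\log N\bigr)$; this forces $N$ well below $1/x$ and a balancing against the tail error $O\bigl(1/(xN\log N)\bigr)$, with the optimal choice $N=\frac1x(\log\frac1x\,\log\log\frac1x)^{-1/3}$ producing exactly the exponents in \eqref{e2:series}. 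You instead bound the head defect term by term, $\frac{1-\cos nx}{n\log n}\le\frac{x^{2}n}{2\log n}$, and sum to get $O\bigl(x^{2}N^{2}/\log N\bigr)$ --- no $\log\log$ loss --- which permits the natural cutoff $N=\lfloor 1/x\rfloor$ with no balancing at all and yields the stronger conclusion $g(x)=\log\log\frac1x+B+O\bigl(1/\log\frac1x\bigr)$, which implies the stated bound since $(\log\frac1x)^{-1/3}\le(\log\log\frac1x)^{1/3}$ for small $x$. Your diagnosis of why the statement has weaker exponents is exactly right: it is the cruder uniform factor $\cos Nx$ in the paper's head estimate. A further point in your favor is that you actually prove $B>0$ (via the monotone comparison giving $B\ge-\log\log 2>0$), whereas the paper's proof only inherits ``a constant'' from the cited exercise and asserts positivity without argument. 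The only steps you should write out in full are the elementary bound $\sum_{n\le N}\frac{n}{\log n}=O\bigl(N^{2}/\log N\bigr)$ (e.g.\ by splitting the sum at $\sqrt N$) and Abel's inequality for $R_2$; both are routine and were identified correctly in your sketch.
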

\begin{proof}
Define $B_x(y) = \sum_{2\le n \le y} \cos nx$. Then, for $0<x<\pi$ and $y>2$, $|B_x(y)| \le \frac\pi{x} + 1$. Therefore, integrating by parts, 
$$
	\sum_{n=N}^M \frac{\cos nx}{n\log n} = \int_N^M \frac{dB_x(y)}{y\log y}
	=\frac{B_x(M)}{M\log M} - \frac{B_x(N)}{N\log N} + \int_N^M \frac{B_x(y)(\log y+1)}{y^2|\log y|^2}dy,
$$
and so
\begin{align*}
	\left|\sum_{n=N}^\infty \frac{\cos nx}{n\log n}\right|
	&\le\frac{\tfrac\pi{x}+1}{N\log N}+ \left(\tfrac\pi{x}+1\right)\int_N^\infty \frac{\log y+1}{y^2\log^2 y}dy\\
	&\le 2\frac{\tfrac\pi{x}+1}{N\log N} \le \frac{4\pi}{xN\log N}.
\end{align*}
Suppose now that $x<\tfrac\pi{2N}$. Then
\begin{align*}
	\sum_{n=2}^N \frac1{n\log n} \ge \sum_{n=2}^N \frac{\cos nx}{n\log n}
	\ge \cos Nx \sum_{n=2}^N \frac1{n\log n}
	\ge\left( 1 - \tfrac{x^2N^2}2\right)\sum_{n=2}^N \frac1{n\log n}.
\end{align*}
Since 
$$
	\sum_{k=2}^n \frac1{k\log k} = \log\log n + B + O(\frac1{n\log n}),
$$
where $B$ is a constant (see Exercise~8.20 of~\cite{Apostol}), we get 
for $x<\frac{\pi}{2N}$,
\begin{equation*}
	g(x) = \log\log N + B + O\left(\frac1{N\log N}\right)
	+O\left(x^2N^2 \log\log N\right) + O\left(\frac1{xN\log N}\right)
\end{equation*}
as $N \to \infty.$  

Choosing $N = \frac1x(\log\frac1x \log\log \frac1x)^{-1/3}$, we obtain~\eqref{e2:series} as $x\to 0+$.
\end{proof}

\begin{remark}
In the following example, for small values of $|x|$, we only need the following consequence of the preceding lemma:
$$
	\sum_{n=2}^\infty \frac{\cos nx}{n\log n} \ge C \log\log (1/|x|),
$$
which can also be obtained using Theorem~V.1.5 of~\cite{Zygmund}.
\end{remark}

\begin{example}\label{jump of pi}
Let $t_0\in (0,2\pi)$. For $0<\delta<1$ define
$$
	g(t) = \begin{cases}
	\pi/2 & t_0-\delta < t < t_0\\
	-\pi/2 & t_0 < t < t_0+\delta
	\end{cases}
$$
and suppose that $g$ is H\"older continuous elsewhere. By Theorem~V.1.3 of~\cite{Z}, the series
$$
	h(t) = 2\sum_{n=2}^\infty \frac{\sin n(t-t_0)}{n\log n}
$$
converges uniformly and defines a continuous function on $[0,2\pi]$. It is well known that
$$
	\tilde h(t) = - 2\sum_{n=2}^\infty \frac{\cos n(t-t_0)}{n\log n},
$$
which is continuous on $[0,2\pi]\setminus\{t_0\}$ (see Theorem~I.2.6 of~\cite{Z}). Define $f=g+h$. Then $\lim_{t\to t_0\pm} f(t) = \mp \pi/2$, so $f$ is piecewise continuous with only one jump, which is of size $\pi$. We want to determine whether $\exp(\tilde f)$ is integrable. Obviously we cannot use Zygmund's Theorem~A. Notice also that we cannot apply Theorem~\ref{1.0} because we do not know without further inspection whether $f\ge \pi/2$ a.e.~on $(0,t_0)$ and $f\le -\pi/2$ a.e.~on $(t_0,2\pi)$.

Now, similarly to~\eqref{e:interval example}, and using the fact that $g$ is H\"older on $[0,2\pi]\setminus \{t_0\}$, there is a constant $C>0$ such that
$$
	\tilde g(t) \le -\log\left| \sin\frac{t-t_0}2\right| + C
$$
for $0\le t \le 2\pi$. To estimate $\tilde h$ near $t_0$, notice first that Lemma~\ref{series} implies that
$$
	\tilde h(t) \le -2\log(\log |t-t_0|^{-1})
$$
for $t$ sufficiently close to $t_0$. Therefore, for some constants $C$, we have
\begin{align*}
	e^{\tilde f(t)} &= e^{\tilde g(t)} e^{\tilde h(t)} 
	\le C \exp\left(-\log |\sin \frac{t-t_0}2|\right) \exp\left(-2\log\log\frac1{|t-t_0|}\right)\\
	&= C\left|\sin \frac{t-t_0}2\right|^{-1}\left(\log \frac1{|t-t_0|}\right)^{-2}
	\le C|t-t_0|^{-1} \left(\log|t-t_0|\right)^{-2},
\end{align*}
which is integrable in a neighborhood of $t_0$, and hence $e^{\tilde f}\in L^1$.

Now, of course, by Theorem~\ref{1.0}, the integrability of $e^{\tilde f}$ means that $f$ has values in $(-\pi/2, \pi/2)$ on a set of positive measure. In fact, by~V.2.13 of~\cite{Zygmund}, the function $h$ is positive on $(t_0, t_0+\epsilon)$ for some $\epsilon>0$ and hence negative on $(t_0-\epsilon,t_0)$ (as an odd function), and so indeed $f<\pi/2$ on $(t_0-\epsilon, t_0)$ and $f>-\pi/2$ on $(t_0, t_0+\epsilon)$.
\end{example}

\begin{remark}
We can use the previous example to construct a function $f\in L^\infty$ such that $\|f\|_\infty = \pi/2$, $|f|<\pi/2$ and $e^{\tilde f}\in L^1$. This should be compared with Zygmund's result in \eqref{e:Zygmund}.
\end{remark}

\begin{open problem}
In Theorem~\ref{1.0}, it is assumed that $f\in L^\infty$ as in Zygmund's Theorem~\ref{Zygmund}. It is natural to ask whether the conclusion of Theorem~\ref{1.0} remains true for unbounded functions.
\end{open problem}

We finish this section with a connection between the estimate in~\eqref{e:1.2} and the distance in $\BMO$ to $L^\infty$ (see Section~VI.6 of~\cite{Garnett}). 

Suppose that the conditions of Theorem~\ref{1.0} are satisfied, that is, $f\in L^\infty$ with $f\ge \pi/2$ almost everywhere and $0<|E|<2\pi$. If $\|g\|_\infty<\pi/2$, then by Theorems~\ref{Zygmund} and~\ref{1.0}, $\widetilde{f\rho_E} - \tilde g\notin L^\infty$, and so $\dist(\widetilde{f\rho_E}, L^\infty) \ge \pi/2$, where
$$
	\dist(\varphi, L^\infty) = \inf\{ \|g\|_\infty : \varphi-\tilde g\in L^\infty\},
$$
which is equivalent to $\inf_{g\in L^\infty} \|\varphi-g\|_{\BMO}$ (see, e.g., page 250 in~\cite{Garnett}). By Corollary VI.6.6, there is no $\epsilon\in (0,1)$ such that
\begin{equation}\label{Garnett (6.6)}
	\sup_I \frac{|\{ t\in I : |\widetilde{f\rho_E}(t)-(\widetilde{f\rho_E})_I|>\lambda\}|}{|I|}
	\le e^{-\lambda/\epsilon}
\end{equation}
for all $\lambda\ge 0$, where the supremum is taken over all arcs $I\subset\T$ and the average $\varphi_I$ is defined by $\varphi_I = \frac1{|I|}\int_I \varphi$ for $\varphi\in L^1$.

The same conclusion also follows directly from~\eqref{e:1.2} if $f\ge \pi/2$, which is no surprise because the requirement that $f$ has a gap in its essential range is a stronger assumption than $\dist(\widetilde{f\rho_E}, L^\infty) \ge \pi/2$. Indeed, assume that \eqref{e:1.2} holds and \eqref{Garnett (6.6)} does not hold, so that there exists $\epsilon \in (0,1)$ satisfying the estimate \eqref{Garnett (6.6)}. Denote $h = f\rho_E$ so that $\tilde h_0 = \frac{1}{2\pi}\int_0^{2\pi} \tilde{h}$, and choose $s \ge 0$ so large that $\lambda = \tilde h_0 +s \ge 0.$ Then there exists a constant $C > 0$ so that  
\begin{align*}
C e^{-\lambda} &\le \frac{1}{2\pi}|\{\theta \in [0, 2\pi]: \tilde{h}(\theta) > \lambda\}| = \frac{1}{2\pi}|\{\theta \in [0, 2\pi]: \tilde{h}(\theta)-\tilde{h_0} > s\}| 
\\
&\le \sup_{I \subset \T} \frac{1}{|I|}|\{\theta \in I: |(\tilde{h}-\tilde{h}_I)(\theta)| > s \}| \le e^{-\frac{s}{\epsilon}},
\end{align*}
so we have $C e^{-\lambda} = C e^{-(\tilde{h}_0+s)} \le e^{-\frac{s}{\epsilon}}.$ Therefore $Ce^{-\tilde{h}_0} \le e^{(1-\frac{1}{\epsilon})s} \to 0$ as $s \to \infty,$ which is a contradiction.

\section{Complex-valued functions}

While real-valued functions are of particular importance in the study of exponential integrability of their conjugate functions, especially in connection with applications, such as Riemann-Hilbert problems and spectral theory of Toeplitz operators above, it would also be of interest to consider the case of complex-valued functions. Indeed, as in Zygmund's Theorem A, we may consider a complex-valued $f\in L^\infty$ and ask under what conditions is $\exp(|\widetilde {f\rho_E}|)$ not integrable. As in the proof of Theorem~\ref{1.0}, we can define $m(\lambda) = |\{ t : \exp(|\widetilde{f\rho_E}(t)|)>\lambda\}|$ and show that if there is a constant $|\{t : |\widetilde{f\rho_E}(t)|>\lambda\}| \ge Ce^{-\lambda}$ for all $\lambda\ge 0$, then $\exp(|\widetilde {f\rho_E}|)$ is not integrable. 

However, in the complex case, the function $f\rho_E$ no longer has a similar (geometric) meaning as in the real case where it can be related to a gap in the essential range. 
For this reason, we say that a set $A\subset\C$ has a gap of size $g>0$ if $A = B \cup C$ for some sets $B$ and $C$ of positive measure with $\dist(A,B)\ge g$. With this, we can state \eqref{e:main2} in Theorem~\ref{1.0} as follows: If $f\in L^\infty$ is real and $\mathcal{R}(f)$ has a gap of size at least $\pi$, then $\exp(\tilde f)$ is not integrable. 

\begin{open problem}
Given a complex-valued function $f$ in $L^\infty$, find a converse to Zygmund's Theorem A.
\end{open problem}

It may be useful to try to relate the exponential integrability of $\tilde f$ to the size of the gap in the essential range of $f$ as in the real case.


\begin{thebibliography}{aa}


\bibitem{Apostol} Apostol, T. M.: Mathematical Analysis. Second edition. Addison-Wesley Publishing Co., Reading, Mass.-London-Don Mills, Ont. (1974).

\bibitem{Axler} Axler, S., Bourdon, P., Ramey, W.: Harmonic Function Theory. Second edition. Graduate Texts in Mathematics, 137. Springer-Verlag, New York (2001). 

\bibitem{Baernstein} Baernstein, A.: Analytic functions of bounded mean oscillation. Aspects of Contemporary Complex Analysis (Proc. NATO Adv. Study Inst., Univ. Durham, Durham, 1979), pp. 3–36, Academic Press, London-New York (1980). 



\bibitem{Garnett} Garnett, J. B.: Bounded Analytic Functions. Revised first edition. Graduate Texts in Mathematics, 236. Springer, New York  (2007).



\bibitem{GM} Garnett, J. B, Marshall, D. E.: Harmonic Measure. Reprint of the 2005 original. New Mathematical Monographs, 2. Cambridge University Press, Cambridge (2008).

\bibitem{MR0247039} Goluzin, G. M.: Geometric Theory of Functions of a Complex Variable. Translations of Mathematical Monographs, Vol. 26 American Mathematical Society, Providence, R.I. (1969).



\bibitem{Katznelson} Katznelson, Y.: An Introduction to Harmonic Analysis. Third edition. Cambridge Mathematical Library. Cambridge University Press, Cambridge (2004).

\bibitem{MR1087298} Lavrent'ev, M. A., Shabat, B. V.: Methods of the Theory of Functions in a Complex Variable. (Russian) Fifth edition. Nauka, Moscow (1987).

\bibitem{MR1076949} Lekishvili, M. M.: Exponential integrability of a conjugate function. (Russian) Mat. Zametki 48, no. 2, 155--157 (1990). 

\bibitem{PVW13} Per\"{a}l\"{a}, A., Virtanen, J. A., Wolf, L.:  A Riemann-Hilbert problem with a vanishing coefficient and applications to Toeplitz operators. Concr. Oper. 1, 28--36 (2013). 

\bibitem{S94} Shargorodsky, E.: On some geometric conditions of Fredholmity of one-dimensional singular integral operators. Integral Equations Operator Theory 20, no. 1, 119--123 (1994). 

\bibitem{S07} Shargorodsky, E.: A remark on the essential spectra of Toeplitz operators with bounded measurable coefficients. 
Integral Equations Operator Theory 57, no. 1, 127--132 (2007). 

\bibitem{ST08} Shargorodsky, E., Toland, J. F.: Bernoulli free-boundary problems. Mem. Amer. Math. Soc. 196, no. 914, viii+70 pp. (2008). 

\bibitem{SV06} Shargorodsky, E., Virtanen, J. A.: Uniqueness results for the Riemann-Hilbert problem with a vanishing coefficient. Integral Equations Operator Theory 56, no. 1, 115--127 (2006). 



\bibitem{MR0290089} Storo\v{z}enko, \`E. A.: Embedding in the class {$e^{L}$} (Russian) Mat. Zametki 10, 17--24 (1971).

\bibitem{MR0414898} Tsuji, M.: Potential Theory in Modern Function Theory. Reprinting of the 1959 original. Chelsea Publishing Co., New York (1975).

\bibitem{V04} Virtanen, J. A.: A remark on the Riemann-Hilbert problem with a vanishing coefficient. Math. Nachr. 266, 85--91 (2004).

\bibitem{Wolff} Wolff, T. H.: Counterexamples to two variants of the Helson-Szegő theorem. Dedicated to the memory of Tom Wolff. J. Anal. Math. 88, 41--62 (2002).

\bibitem{Zygmund} Zygmund, A.: Sur les fonctions conjug\'ees. Fund. Math. 13, 284--303 (1929).

\bibitem{Z} Zygmund, A.: Trigonometric Series. Vol. I, II. Third edition.  
Cambridge Mathematical Library. Cambridge University Press, Cambridge (2002).
 \end{thebibliography}
\end{document}